\newtheorem{theorem}{Theorem}[section]
\theoremstyle{definition}
\newtheorem{definition}[theorem]{Definition}
\numberwithin{equation}{section}
\title[Optimal time averages]{Optimal time averages in non-autonomous nonlinear dynamical systems}
\author[C. R. Doering]{Charles R. Doering}
\address[C. R. Doering]{University of Michigan, Ann Arbor MI, USA}
\email{{\tt doering@umich.edu}}
\author[A. McMillan]{Andrew McMillan}
\address[A. McMillan]{University of Michigan, Ann Arbor MI, USA}
\email{\tt andrewnm@umich.edu}
\keywords{dynamical systems, ergodic optimization, polynomial optimization, semidefinite programming, 
sum-of-squares optimization}
\subjclass[2010]{37C99,49N99,70K99}
\begin{document}

\begin{abstract}
The auxiliary function method allows computation of extremal long-time averages of functions of dynamical variables in autonomous nonlinear ordinary differential equations via convex optimization.
For dynamical systems defined by autonomous polynomial vector fields, it is operationally realized as a semidefinite program utilizing sum of squares technology. 
In this contribution we review the method and extend it for application to periodically driven non-autonomous nonlinear vector fields involving trigonometric functions of the dynamical variables.
The damped driven Duffing oscillator and periodically driven pendulum are presented as examples to illustrate the auxiliary function method's utility.
\end{abstract}

\maketitle


\section{Introduction}

Dynamical systems governed by ordinary differential equations (ODEs) can have complex global attractors containing complicated and chaotic solutions.
The primary interest in such cases is often on statistics of solutions, e.g., long-time averages of functions of the dynamical variables.
Averages along trajectories generally depend on initial conditions and it is natural to seek the largest or smallest such averages among all solutions, as well as the extremal trajectories that realize them.
Moreover, for various purposes---including, notably, “control of chaos” \cite{OGY1990,HO1996,YangT00}---it is valuable to know extremal trajectories regardless of their stability. 

The brute-force approach to searching for extremal time averages is to construct a large number of candidate trajectories which is ad hoc, computationally expensive, and operationally limited to sufficiently stable solutions.
An alternative approach that is broadly applicable and often more tractable is to construct sharp a priori bounds on long-time averages via convex optimization.
In this paper we review a mathematical device that has come to be known as the {\em auxiliary function method} \cite{Chern14, Fantuzzi16,Tobasco18} and its computational implementation, and describe some new developments to generalize it applicability.
To illustrate the method's utility we apply the tools to the damped driven Duffing oscillator and periodically driven pendulum.

To introduce the auxiliary function method we focus on determining upper bounds for time averages of functions of the dynamical variable for autonomous ODEs; lower bounds are analogous.
Consider $\textbf{x}(t)\in \mathbb{R}^d$ satisfying
\begin{equation}
    \frac{d\textbf{x}}{dt}=\textbf{f}(\textbf{x})
    \label{AODE}
\end{equation}
for continuously differentiable vector fields $\textbf{f}(\textbf{x})$.
When there is no confusion, we will denote the vector components of $\textbf{x}(t)$ and $\textbf{f}(\textbf{x})$ as ${x}_i(t)$ and $f_i(\textbf{x})$, respectively.

Given a quantity of interest $\Phi(\textbf{x})$, define its long-time average along the trajectory $\textbf{x}(t)$ with $\textbf{x}(0) = \textbf{x}_0$ by
\begin{equation}
    \overline{\Phi}(\textbf{x}_0)=\limsup\limits_{T \rightarrow \infty}\frac{1}{T}\int_0^T\Phi (\textbf{x}(t))dt.
\end{equation}
The choice of $\Phi(\textbf{x})$ is subject to the particular application in mind.
Let $\text{B} \subset \mathbb{R}^d$ be a compact invariant region in the phase space.
In a dissipative system $\text{B}$ could be an absorbing compact set, or in a conservative system $\text{B}$ could be defined by constraints on dynamical invariants.
We are interested in the maximal long-time average among all trajectories (eventually) remaining in $\text{B}$, i.e.,
\begin{equation}
  \overline{\Phi}^*=\max_{\textbf{x}_0\in \text{B}}\overline{\Phi}(\textbf{x}_0).
  \label{phibarstar}
\end{equation}
The fundamental questions are: what is the value of $\overline{\Phi}^*$ and what trajectories attain it?

Upper bounds on averages can be deduced using the fact that time derivatives of bounded functions average to zero.
This elementary observation follows from the fact that for every $\text{V}(\textbf{x}) \in C^1(\text{B})$---the set of continuously differentiable functions on B---we have
\begin{equation}
    0=\limsup\limits_{T \rightarrow +\infty}\frac{\text{V}(\textbf{x}(T)-\text{V}(\textbf{x}(0))}{T}=\overline{\frac{d}{dt}\text{V}(\textbf{x}(\cdot))}=\overline{\textbf{f}(\textbf{x}(\cdot))\cdot\nabla \text{V}(\textbf{x}(\cdot))}.
\end{equation}
We hereafter refer such $\text{V}(\textbf{x}) \in C^1(\text{B})$ as ``auxiliary'' functions.
Note that equation (4) holds for any auxiliary function so there is an infinite family of functions with the same time average as $\Phi(\textbf{x})$.  In particular,
\begin{equation}
    \overline{\Phi}(\textbf{x}_0)=\overline{ [ \Phi+\textbf{f} \cdot\nabla \text{V} ] }(\textbf{x}_0).
\end{equation}

For any auxiliary function one can obtain a trivial upper-bound on $\overline{\Phi}(\textbf{x}_0)$ by bounding the right hand-side point-wise on $\text{B}$ and susequently maximizing the left hand side over initial data $\textbf{x}_0$:
\begin{equation}
     \overline{{\Phi}}^* \leq \max_{\textbf{x}\in \text{B}}[\Phi(\textbf{x})+\textbf{f}(\textbf{x})\cdot\nabla \text{V}(\textbf{x})].
     \end{equation}
The best such {\it a priori} upper bound on $\overline{{\Phi}}^*$ is then
\begin{equation}
    \overline{{\Phi}}^*\leq \inf_{\text{V}\in C^1(\text{B})}\max_{\textbf{x}\in \text{B}} \ [\Phi(\textbf{x})+\textbf{f}(\textbf{x})\cdot \nabla \text{V}(\textbf{x})].
    \label{ineq}
\end{equation}

The minimization over the right hand side of (7) is a {\it convex} optimization in the auxiliary function V.
Indeed, define the functional
\begin{equation}
    \mathcal{F}(\text{V})= \max_{\textbf{x}\in \text{B}} \ [\Phi(\textbf{x})+\textbf{f}(\textbf{x})\cdot \nabla \text{V}(\textbf{x}))]
\end{equation}
and insert a convex combination of auxiliary functions and apply the triangle inequality to deduce
 \begin{eqnarray}
     &&\mathcal{F}(\lambda \text{V}_1+(1-\lambda)\text{V}_2) = \max_{\textbf{x}\in \text{B}}[\Phi(\textbf{x})+\textbf{f}(\textbf{x})\cdot \nabla(\lambda \text{V}_1(\textbf{x})+(1-\lambda)\text{V}_2(\textbf{x}))] \nonumber \\
     &&= \ \ \max_{\textbf{x}\in \text{B}} \ [\lambda\{\Phi(\textbf{x})+\textbf{f}(\textbf{x})\cdot \nabla \text{V}_1(\textbf{x})\}+(1-\lambda))\{\Phi(\textbf{x}) + \textbf{f}(\textbf{x})\cdot\nabla\text{V}_2(\textbf{x}))\}] \nonumber \\
    &&\leq \ \lambda \max_{\textbf{x}\in \text{B}} \ [\Phi(\textbf{x})+\textbf{f}(\textbf{x})\cdot \nabla \text{V}_1(\textbf{x})] + (1-\lambda)\max_{\textbf{x}\in \text{B}}[\Phi(\textbf{x})+\textbf{f}(\textbf{x})\cdot \nabla \text{V}_2(\textbf{x})] \nonumber \\
     &&= \lambda \mathcal{F}(\text{V}_1)+(1-\lambda)\mathcal{F}(\text{V}_2). \nonumber 
\end{eqnarray}
\indent The remarkable fact is that the inequality in (\ref{ineq}) is actually an {\it equality}:
\begin{equation}
    \overline{\Phi}^* = \inf_{\text{V}\in C^1(\text{B})} \max_{\textbf{x}\in \text{B}} [\Phi(\textbf{x})+\textbf{f}(\textbf{x})\cdot \nabla \text{V}(\textbf{x})].
    \label{eq}
\end{equation}
Details of the proof can be found elsewhere \cite{Tobasco18} but we sketch it here in four lines for completeness:
\begin{equation*}
    \begin{split}
        \max_{\textbf{x}_0\in \text{B}}\overline{\Phi}=\max_{\substack{\mu\in \text{Pr(B)}\\ \mu\, \text{is invar.}}}\int \Phi d\mu\\
        =\sup_{\mu \in \text{Pr(B)}}\inf_{\text{V}\in C^1(\text{B})}\int (\Phi +f\cdot \nabla \text{V}) d\mu\\
        =\inf_{\text{V}\in C^1(\text{B})}\sup_{\mu \in \text{Pr(B)}}\int (\Phi +f\cdot \nabla \text{V} ) d\mu\\
        =\inf_{\text{V}\in C^1(\text{B})} \max_{\textbf{x}\in \text{B}} [\Phi(\textbf{x})+\textbf{f}(\textbf{x})\cdot \nabla \text{V}(\textbf{x})]
    \end{split}
\end{equation*}
The key observations above are (i) that time averages can be realized as phase space averages against invariant measures, (ii) maximizing over invariant probability measures can be realized as a Lagrange multiplier problem where $\int f\cdot \nabla \text{V}  \, d\mu = 0 $ for all $\text{V}$ ensures $\mu$ is invariant, (iii) swapping the order of supremum and infimum can be performed due to standard abstract min-max theorems, and (iv) the $\sup_{\mu \in \text{Pr(B)}} \int (\Phi +f\cdot \nabla \text{V} ) d\mu$ is realized by a delta-mass located where $\Phi(\textbf{x})+\textbf{f}(\textbf{x})\cdot \nabla \text{V}(\textbf{x})$ assumes its maximum.

Thus arbitrarily sharp bounds on the maximal long-time average are available via convex optimization over auxiliary functions. Optimal or sequences of near-optimal V produce optimal or sequences of increasingly near-optimal bounds. 
Moreover, if $\text{V}\in C^1(\text{B})$ is an optimal auxiliary function, then it's straightforward to see that the corresponding optimal trajectory or trajectories reside in the subset of B where the continuous function $\Phi(\textbf{x})+\textbf{f}(\textbf{x})\cdot \nabla \text{V}(\textbf{x}) = \overline{\Phi}^*$.
Likewise if V is just near-optimal, then corresponding near-optimal trajectories spend a significant fraction of time in “high altitude” level sets of $\Phi(\textbf{x})+\textbf{f}(\textbf{x})\cdot \nabla \text{V}(\textbf{x})$.
Either way the auxiliary function approach can be used to localize extremal trajectories in the phase space; see \cite{Tobasco18} for further details and an example application to the Lorenz equations.

On the surface the minimization over auxiliary functions in (\ref{eq}) seems computationally intractable, as the optimization must be performed over an infinite dimensional function space.
However, there are two key observations to be made.
The first is that (\ref{phibarstar}) is equivalent to finding
\begin{equation}
    \begin{gathered}
        \min\, \text{U} \\
        \text{s.t.} \,\, \overline{\Phi (\textbf{x})}\leq \text{U}\, ,\,\forall \textbf{x}\in \text{B}.
        \label{11}
    \end{gathered}
\end{equation}
and a sufficient condition for the constraint is that $\Phi(\textbf{x})+\textbf{f}(\textbf{x})\cdot \nabla V (\textbf{x})\leq \text{U}$ for all $\textbf{x}\in B$ because a global point-wise constraint obviously implies a global average constraint.
Therefore, (\ref{11}) can be replaced with a minimization subject to a point-wise non-negative constraint,
\begin{equation}
\begin{gathered}
\min \text{U}\\
   \text{s.t.}\,\, \text{U}-\Phi(\textbf{x})-\textbf{f}(\textbf{x})\cdot \nabla \text{V} (\textbf{x})\geq 0\,,\,\forall \textbf{x}\in \text{B}.
    \end{gathered}
    \label{12}
\end{equation}
As stated, the problem reduces to determining the non-negativity of a given multivariate function.
Unfortunately determining the non-negativity of multivariate functions is NP hard \cite{NP}, but in section 2, we will formulate the problem as a semidefinite program and perform suitable and somewhat natural relaxations to make the problem computationally accessible. 

Meanwhile it is important to recognize that interest in extreme time averages is not new.
In the abstract dynamical systems community it goes under the name ``ergodic optimization'' \cite{OJ2006,B2018} and was  motivated in part by conjectures late last century that many quantities of interest in applications for chaotic dynamical systems are optimized, in a time averaged sense, on (relatively) simple unstable periodic orbits \cite{HO1996}.
Those conjectures, in turn, underly ``control of chaos'' notions \cite{OGY1990} that emerged earlier.

Rather than developing theoretical or quantitative computational tools to evaluate extreme time averages, however, the ergodic optimization field focused on more conceptual questions resulting in theorems such as that every ergodic measure is the unique maximizing measure for some continuous function.
(In our setting this is the statement that for every initial condition $\textbf{x}_0 \in \text{B}$, $\overline{\Phi}(\textbf{x}_0) = \overline{\Phi}^*$ for some continuous function $\Phi$.)
The ergodic optimization community recognized the variational structure reflected in (\ref{eq}) and---given complete knowledge of the flow map\footnote{The flow map takes $\textbf{x}_0$ to $\textbf{x}(t)$ along the ensuing trajectory for each time $t > 0$.}---proposed a strategy to produce a sequence of increasingly near-optimal auxiliary functions \cite{CG1993}.
In section 3 we offer an elementary example to explicate both the ideas discussed in this section and those developed in the immediately following section 2.

\section{Semidefinite Programming, Sum of Squares Technology, and Polynomial Dynamics }
\subsection{Semidefinite Programming}
Computing upper and lower bounds on the quantity of interest, $\overline{\Phi}$, can be simplified to a convex optimization problem over a finite dimensional vector space of polynomials in a Semidefinite Program (SDP) under suitable relaxations.
In general, a SDP takes $C, A_i\in \mathbb{R}^{n\times n}$ and $b\in \mathbb{R}^m$ for $i\in \{1,2,..,m\}$ as inputs with the goal of determining
\begin{equation}
    \begin{gathered}
         \min_{X\in \mathbb{R}^{n\times n}} \langle C,X \rangle\\
         \text{s.t.}\, \, \langle A_i, X\rangle = b_i\, \text{for $i\in \{1,2,.., m\}$ and} \ X\succeq 0
    \end{gathered}
    \label{13}
\end{equation}
where for two matrices $B,D\in \mathbb{R}^{n\times n}$, $\langle B , D \rangle=\sum_{i=1}^n\sum_{j=1}^nb_{i,j}d_{i,j}$ and $X\succeq 0$ means $X$ is positive semi-definite.
Equation (\ref{13}) is frequently called the \textit{primal} problem of a SDP which also has a \textit{dual} problem of the form
\begin{equation}
\begin{split}
    \max_{y\in \mathbb{R}^m} \langle b, y\rangle\\
     \text{s.t.}\, \, C \succeq \sum_{i=1}^m y_i A_i
    \end{split}
    \label{14}
\end{equation}
where $P\succeq Q$ means $P-Q\succeq 0$.
If the solutions to both the primal and dual problem are the same, then we say that the SDP has a \textit{dual gap} of zero.
See \cite{Boyd} for a review of SDPs and their applications.

An important class of SDPs are polynomial optimization problems. In particular, one is frequently interested in optimizing a multivariate polynomial subject to a set of non-negative constraints. If the polynomial to optimize is given to be $p(\textbf{x})$ such that $g_{i}(\textbf{x})\geq 0$ for $i \in \{1,2,..,m\}$ and $\textbf{x}\in \mathbb{R}^d$, the  problem is of the form \begin{equation}
    \begin{gathered}
    \min_{\textbf{x}\in \mathbb{R}^d} p(\textbf{x})\\
    \text{s.t.}\,\, g_{i}(\textbf{x})\geq 0\,\text{for $i \in \{1,2,..,m\}$}.
    \label{above}
    \end{gathered}
\end{equation}
In the next Section 2.2 we will see how problems of the form (\ref{above}) may written as a SDP just as in (\ref{13}). 
SDPs are now somewhat standard and easily implementable as there are a wide array of various soft-wares to solve well-posed problems of the form (\ref{13}) and (\ref{14}).
One needs only a parser, such as YALMIP, SOSTOOLS, or GloptiPoly, and a semi-definite program solver, such as Mosek, SeDumi, or SCS.
Many of these are implementable in a standard Mathlab toolbox. 
Computations in this paper were performed using Yalmip paired with Mosek.

\subsection{Positivity of Polynomials}
\subsubsection{Global Positivity}
Differential equations for many applications are purely polynomial in their arguments.
That is, one is frequently interested in $\dot{\textbf{x}} = \textbf{f}(\textbf{x})$ for $f_i(\textbf{x})\in \mathbb{R}[\textbf{x}] $, where $\mathbb{R}[\textbf{x}]$ is the vector space of all polynomials over $\textbf{x}$.
If $f_i(\textbf{x})$ is polynomial and $\text{V}(\textbf{x})$ is restricted to $\mathbb{R}[\textbf{x}]$, then the constraint in (\ref{12}) simplifies to determining whether a multi-variate polynomial is non-negative.
Even determining the non-negativity of a multivariate polynomial is \textit{still} NP hard, however, except for an extremely limited set of examples such as uni-variate or quadratic polynomials \cite{NP}.
The key observation is that determining the stronger condition that the polynomial is a Sum of Squares (SOS) of polynomials is a problem can be solved in polynomial time \cite{parillo1,parillo2}.
\begin{definition} A polynomial $\text{p}(\textbf{x})\in \mathbb{R}[\textbf{x}]$ is a \textbf{Sum of Squares} if there is a finite collection of polynomials $p_i(\textbf{x})\in \mathbb{R}[\textbf{x}]$ such that $\text{p}(\textbf{x})=\sum_{i=1}^N {[{p_i}(\textbf{x})]}^2$.
We denote $\mathcal{S}[\textbf{x}]$ and $\mathcal{S}_d[\textbf{x}]$ as the cones of all SOS polynomials and all SOS polynomials up to degree d, respectively.
\end{definition}
\noindent
This sum of squares condition is of course sufficient for the non-negativity of a polynomial, and it is necessary if the polynomials in question are up to quadratic \cite{Hilbert}, but in general being SOS is not equivalent to non-negativity.
Therefore, one might be concerned that the proposed strengthening, of going from a non-negative polynomial to one with a SOS representation, has given up too much.
However, there is a wonderful result \cite{Lassere}, which states that SOS polynomials are dense in the space of non-negative, real polynomials of arbitrary degree and of arbitrary dimension in the $\ell_1$ norm of the polynomial's coefficients.
There is quite a rich history in determining whether a polynomial, or more generally a rational function, can be written as a SOS or a sum of rational functions with square numerators and denominators dating back to Hilbert's 17th problem; see \cite{Marshall} for a historical review. 

Fortunately for applications there is a simple yet computationally useful result about representations of SOS polynomials:
\begin{theorem} Given a multi-variate polynomial $p(\textbf{x})$ in n variables and of degree 2d, $p(\textbf{x})$ is representable as a sum of squares if and only if there exists a positive semi-definite  and symmetric matrix Q such that 
\begin{equation*}
    p(\textbf{x})=z(\textbf{x})^TQz(\textbf{x}),
\end{equation*}
where  $z(\textbf{x})=[1, x_1,x_2,..,x_n,x_1x_2,..,x_n^d]$.
\end{theorem}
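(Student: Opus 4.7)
The plan is to prove the two directions of the biconditional separately, with the key insight being that the coordinates of $z(\mathbf{x})$ form a basis for the space of polynomials of degree at most $d$.

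For the easier direction ($\Leftarrow$), I would assume $Q$ is symmetric and positive semi-definite and invoke the spectral theorem (or a Cholesky-type factorization) to write $Q = B^T B$ for some real matrix $B$. Substituting gives
\begin{equation*}
p(\mathbf{x}) = z(\mathbf{x})^T B^T B z(\mathbf{x}) = \bigl(Bz(\mathbf{x})\bigr)^T\bigl(Bz(\mathbf{x})\bigr) = \sum_j \bigl(Bz(\mathbf{x})\bigr)_j^2,
\end{equation*}
which is manifestly a sum of squares since each entry $\bigl(Bz(\mathbf{x})\bigr)_j$ is a real polynomial (of degree at most $d$) in $\mathbf{x}$.

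For the harder direction ($\Rightarrow$), I would assume $p(\mathbf{x}) = \sum_{i=1}^{N} p_i(\mathbf{x})^2$ and first argue that each $p_i$ has degree at most $d$. The key subpoint is non-cancellation of top degrees: if $D = \max_i \deg p_i$ and $\widetilde{p}_i$ denotes the homogeneous degree-$D$ part of $p_i$, then the homogeneous degree-$2D$ part of $p$ equals $\sum_i \widetilde{p}_i^2$, which is nonzero because a real sum of squares of polynomials vanishes only if each summand does. Hence $2D \le \deg p = 2d$, so $D \le d$. Each $p_i$ therefore lies in the span of the monomials in $z(\mathbf{x})$ and can be written as $p_i(\mathbf{x}) = c_i^T z(\mathbf{x})$ for some $c_i \in \mathbb{R}^{N}$. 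Then
\begin{equation*}
p(\mathbf{x}) = \sum_i \bigl(c_i^T z(\mathbf{x})\bigr)^2 = z(\mathbf{x})^T \Bigl(\sum_i c_i c_i^T \Bigr) z(\mathbf{x}),
\end{equation*}
and $Q := \sum_i c_i c_i^T$ is visibly symmetric and positive semi-definite as a sum of rank-one PSD matrices.

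The main obstacle is the degree-control step in the forward direction: the identity $p = z^T Q z$ constrains only the total polynomial, and one must rule out the possibility that the $p_i$ individually carry terms of degree larger than $d$ whose squares would cancel. The non-cancellation argument above handles this, using the fact that we are over $\mathbb{R}$. A secondary subtlety worth flagging in the proof is that $Q$ is \emph{not} uniquely determined by $p$: while the entries of $z(\mathbf{x})$ are linearly independent as polynomials, the products $z_i(\mathbf{x}) z_j(\mathbf{x})$ are not, so the linear map $Q \mapsto z^T Q z$ has a nontrivial kernel. This affine freedom in the choice of Gram matrix is precisely what makes the SOS condition expressible as a semidefinite feasibility problem and hence tractable.
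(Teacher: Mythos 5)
Your proof is correct and follows essentially the same route as the paper: the forward direction constructs the Gram matrix $Q=\sum_i c_i c_i^T$ from the coefficient vectors of the summands, exactly as in the paper's proof. You additionally supply two details the paper glosses over---the factorization $Q=B^TB$ for the ``if'' direction (which the paper simply calls evident) and the non-cancellation argument showing each summand has degree at most $d$ so that it lies in the span of $z(\textbf{x})$---the latter of which fills a genuine, if minor, gap in the paper's write-up.
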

\begin{proof}
The ``if'' is evident.
Conversely, suppose that $p(\textbf{x})$ has a sum of squares representation.
Then
\begin{equation*}
\begin{split}
    p(\textbf{x})=\sum_{i=1}^n{q_{i}(\textbf{x})}^2
    =\sum_{i=1}^n {({a_i}^Tz(\textbf{x}))}^2\\
    =\sum_{i=1}^n (z^T(\textbf{x})a_i)({a_i}^Tz(\textbf{x}))\\
    =z^T(\textbf{x})\big (\sum_{i=1}^na_i{a_i}^T\big )z(\textbf{x})
    =z^T(\textbf{x})Qz(\textbf{x})
    \end{split}
\end{equation*}
\end{proof}
\noindent
Therefore, determining whether an even degree, non-negative polynomial is a SOS is equivalent to finding a positive semi-definite and symmetric matrix, Q, such that 
\begin{equation}
    p(\textbf{x})=z(\textbf{x})^TQz(\textbf{x})\geq 0,
    \label{16}
\end{equation}
where $z(\textbf{x})$ is a suitable polynomial basis. 
We compute an example for demonstration purposes: suppose we wish to represent $f(x,y)=2x^4+5y^4+x^2y^2$, a SOS polynomial, in the form of (\ref{16}). 
Write
\begin{equation}
    \begin{split}
        f(x,y)=2x^4+5y^4+x^2y^2 
        =[x^2\, y^2\, xy]^T
        \begin{bmatrix}
q_{11} & q_{12} & q_{13}\\
q_{12} & q_{22} & q_{23}\\
q_{13} & q_{23} & q_{33}\\
\end{bmatrix}[x^2\, y^2\, xy]\\
=q_{11}x^4+q_{22}y^4+(q_{33}+2q_{12})x^2y^2+2q_{13}x^3y+2q_{23}xy^3.
    \end{split}
\end{equation}
Equating coefficients we find
\begin{equation}
q_{11}=2\,, q_{22}=5 ,\, q_{33}+2q_{12}=1, \,q_{23}=0\,,  \,q_{13}=0
\end{equation}
so that the matrix is positive semi-definite for $-\sqrt{10} \le q_{12} \le \frac{1}{2}$ with $q_{33}=1-2q_{12}$.

\subsubsection{Local Positivity}

The SOS criterion in (\ref{16}) is a global condition insofar as it insists that our desired polynomial is non-negative for all $\textbf{x}\in \mathbb{R}^d$.
But frequently one is satisfied with  local positivity of a polynomial, and this was the original formulation of the problem in (\ref{12}).
Due to the robustness of characterizing regions in phase space with polynomials, we can restrict our attention to locality constraints defined in terms of only polynomials.
\begin{definition} A set $\textbf{K}$ is called \textbf{semi-algebraic} if $\textbf{K}$ is defined by finitely many polynomial equalities or inequalities. A prototypical example of such \textbf{K} is
\begin{equation}
    \textbf{K}:=\{\textbf{x}\in \mathbb{R}^d\, |\, g_i(\textbf{x})\leq 0\, , h_j(\textbf{x})=0\, \, \text{for} \,i=1,...,m \,\text{and} \, j=1,...,n\},
    \label{19}
\end{equation}
where $g_i(\textbf{x}), h_j(\textbf{x}) \in \mathbb{R}[\textbf{x}]$.
\bigskip

\noindent
We now look to enforce equation (\ref{16}) under the strengthened constraint
\begin{equation}
    p(\textbf{x})=z(\textbf{x})^T Qz(\textbf{x})\geq 0\,, \, \forall \textbf{x}\in \text{semi-algebraic set {\bf K}}.
    \label{20}
\end{equation}
\end{definition}
One way of viewing the localized constraint of being within \textbf{K} is to say that for all $\textbf{x}\in \textbf{K}$
\begin{equation}
\begin{split}
    \sum_{j=1}^nh_j(\textbf{x})r_j(\textbf{x})=0\,,\,\forall r_j(\textbf{x})\in \mathbb{R}[\textbf{x}] \quad \text{and}\\
    \sum_{i=1}^mg_i(\textbf{x})s_i(\textbf{x})\leq 0\,, \, \forall s_i(\textbf{x})\in \mathbb{R}^+[\textbf{x}],
    \end{split}
\end{equation}
where $\mathbb{R}^+[\textbf{x}]$ is the positive cone in $\mathbb{R}[\textbf{x}]$. 
This can be realized by writing (\ref{20}) as 
\begin{equation}
    \begin{gathered}
        \text{Find} \,r_1,..,r_n\,\text{and}\, s_1,..,s_m\\
        \text{s.t.}\,\, p(\textbf{x})+ \sum_{i=1}^nh_i(\textbf{x})r_i(\textbf{x})+\sum_{j=1}^mg_j(\textbf{x})s_j(\textbf{x})\geq 0\\
        s_1,..,s_m\in \mathcal{S}[\textbf{x}]
    \end{gathered}
    \label{22}
    \end{equation}
 where we've replaced the positive cone condition with being representable as a SOS. Equation (\ref{22}) is frequently called the \textit{S-Procedure}, where the ``S'' comes from the SOS constraints on the $s_i$ polynomials.

\subsubsection{Sum of Squares in Dynamical Systems}
Returning to (\ref{12}), replacing positivity of $\text{U}-\Phi(\textbf{x})-\textbf{f}(\textbf{x})\cdot \nabla \text{V}$ on all of ${\mathbb R}^d$ to SOS allows us to relax the problem to 
\begin{equation}
    \begin{gathered}
        \min\, \text{U} \\
        \text{s.t.} \,\text{U}-\Phi(\textbf{x})-\textbf{f}(\textbf{x})\cdot \nabla \text{V} (\textbf{x})\in \mathcal{S}[\textbf{x}].
    \end{gathered}
    \label{23}
\end{equation}
Moreover, if we wish to ensure that the polynomial in question is only locally positive on the compact set $\text{B} \subset$ semi-algebraic set \textbf{K} defined as in (\ref{19}), we can augment equation (\ref{23}) with
\begin{equation}
\begin{gathered}
         \min\, \text{U} \\
        \text{s.t.} \,\text{U}-\Phi(\textbf{x})-\textbf{f}(\textbf{x})\cdot \nabla \text{V} (\textbf{x}) + \dots \\
        \dots +\sum_{i=1}^m h_i(\textbf{x}) r_i(\textbf{x})+\sum_{j=1}^n g_j(\textbf{x}) s_j(\textbf{x}) \in \mathcal{S}[\textbf{x}] \   \text{and}\\
        s_1(\textbf{x}),...,s_n(\textbf{x})\in \mathcal{S}[\textbf{x}].
\end{gathered}
\end{equation}

A few key remarks are required here.
The polynomials $f_i(\textbf{x})$ are exogenously given as part of the dynamical system in question but there are choices to be made for polynomials $\Phi (\textbf{x})$ and $\text{V}(\textbf{x})$.
$\Phi(\textbf{x})$ is chosen according to the particular application in mind.
However, upon further inspecting the programs from a computational perspective, it turns out that the resulting U is generally very sensitive to the choice of $\text{V}(\textbf{x})$.
In particular the degree of $\text{V}(\textbf{x})$ is pertinent, and the reason is two fold.

Firstly, if the degree of $\text{V}(\textbf{x})$ is too small then the SOS constraint may fail to be feasible even within a reasonable tolerance for numerical error.
Secondly, the resulting U may fail to be a sharp upper bound for $\overline{\Phi}$.
The restriction that $\text{V}(\textbf{x})$ is polynomial is completely absent in (\ref{11}) as well as (\ref{12}), the original problem and its slight strengthening, so it is unreasonable to expect that sharp bounds can be achieved by restricting to the space of polynomials.
Fortunately, though, there is the following result \cite{Lakshmi}:
\begin{theorem} Suppose that $\textbf{K}$ is a compact, semi-algebraic set defined in terms of $\{g_i\}_{i=1}^m$. Let $s=\max_i \text{deg}(g_i)$, $r=\text{deg}(\text{U}-\Phi-\textbf{f}\cdot \nabla \text{V})$ and $\Gamma_d$ denote the set of polynomials that are a weighted sum of the $g_i$'s, where the weights are SOS polynomials of degree no more than $r-s$. If there exists $L$ such that $L-{||\textbf{x}||}^2\in \Gamma_d$ for some $d$, then
\begin{equation*}
    \overline{\Phi}^*=\lim\limits_{d\rightarrow \infty}\inf_{\substack{\text{U}\in \mathbb{R} \\ V\in \mathbb{R}_d[\textbf{x}]}}\{\text{U} \, |\, \text{U}-\Phi(\textbf{x})-\textbf{f}(\textbf{x})\cdot \nabla \text{V} (\textbf{x})\in \Gamma_d\}.
\end{equation*}
\end{theorem}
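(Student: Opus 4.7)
The plan is to establish the limit by proving two inequalities: the easy direction that every feasible $(U, V)$ for the SOS relaxation is an upper bound on $\overline{\Phi}^*$, and the hard direction that the infimum can be made arbitrarily close to $\overline{\Phi}^*$ as $d$ grows. Taken together these show that the infimum converges down to $\overline{\Phi}^*$.

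For the easy direction, suppose $(U, V)$ is feasible for the program at level $d$, i.e., $U - \Phi - \textbf{f}\cdot\nabla V = \sum_i \sigma_i g_i$ with $\sigma_i$ SOS of controlled degree. Since each $g_i \ge 0$ on $\textbf{K} \supset \text{B}$ and each $\sigma_i$ is globally non-negative, the right-hand side is non-negative on $\text{B}$, so $\Phi + \textbf{f}\cdot\nabla V \le U$ pointwise on $\text{B}$. Integrating against any invariant probability measure supported in $\text{B}$ and using that $\int \textbf{f}\cdot\nabla V \, d\mu = 0$ for invariant $\mu$ shows $\int \Phi \, d\mu \le U$, and then taking the supremum over invariant measures and using step (i) of the sketch following (\ref{eq}) yields $\overline{\Phi}^* \le U$. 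Hence the infimum over the SOS relaxation at every level $d$ is $\ge \overline{\Phi}^*$.

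For the reverse inequality, I would use the equality (\ref{eq}) to pick, for any $\epsilon > 0$, an auxiliary function $\widetilde V \in C^1(\text{B})$ with $\max_{\textbf{x}\in \text{B}}[\Phi + \textbf{f}\cdot\nabla \widetilde V] \le \overline{\Phi}^* + \epsilon/3$. By the Stone--Weierstrass theorem applied on a slightly larger compact neighborhood of $\text{B}$, approximate $\widetilde V$ in $C^1$ by a polynomial $V$ so that $\max_{\textbf{x}\in \text{B}}[\Phi + \textbf{f}\cdot\nabla V] \le \overline{\Phi}^* + 2\epsilon/3$. Setting $U := \overline{\Phi}^* + \epsilon$, the polynomial $U - \Phi - \textbf{f}\cdot\nabla V$ is then \emph{strictly} positive on $\textbf{K}$ (after restricting $\textbf{K}$ to a compact subset containing $\text{B}$, which the Archimedean condition $L - \|\textbf{x}\|^2 \in \Gamma_d$ guarantees). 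Now invoke Putinar's Positivstellensatz: under the Archimedean hypothesis, every polynomial strictly positive on $\textbf{K}$ admits a representation $\sigma_0 + \sum_i \sigma_i g_i$ with $\sigma_i$ SOS. Such a representation automatically lies in $\Gamma_d$ once $d$ is taken large enough to accommodate the degrees of the SOS multipliers appearing in it.

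The main obstacle will be the degree bookkeeping between the two approximation steps. The polynomial approximation $V$ of $\widetilde V$ may need to have arbitrarily high degree to drive $\epsilon \to 0$, and the SOS multipliers produced by Putinar's theorem have degrees that cannot in general be bounded uniformly in terms of $r$ and $s$ alone. This is precisely why the conclusion is phrased as a limit over $d$ rather than attainment at a fixed level: one must let the SOS multiplier degrees grow to simultaneously (a) approximate an optimal $C^1$ auxiliary function by a polynomial and (b) certify the resulting strict positivity via a quadratic-module representation. Once these two approximations are interleaved correctly — typically by fixing $\epsilon$, producing $V$ and $U$, then choosing $d$ large enough that Putinar applies within $\Gamma_d$ — the desired upper bound $\inf\{U : U - \Phi - \textbf{f}\cdot\nabla V \in \Gamma_d\} \le \overline{\Phi}^* + \epsilon$ follows, and letting $\epsilon \to 0$ closes the argument.
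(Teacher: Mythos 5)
The paper does not actually prove this theorem --- it is quoted from the reference \cite{Lakshmi} with no argument given --- so there is no in-paper proof to compare against. Your proposal reconstructs the standard argument for results of this type, and its architecture is sound: the weak-duality direction via integration against invariant measures, and the strong direction via the equality (\ref{eq}), polynomial approximation of a near-optimal $C^1$ auxiliary function, and Putinar's Positivstellensatz under the Archimedean hypothesis. Your closing remark about why the conclusion must be a limit in $d$ --- the Putinar multiplier degrees cannot be bounded a priori --- is exactly the right diagnosis, and it also explains why the degree cap ``$r-s$'' in the paper's statement of $\Gamma_d$ should really be read as a truncation that grows with $d$.

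Two points need tightening. First, Stone--Weierstrass gives density in $C^0$, not $C^1$; you need the stronger fact that polynomials are dense in $C^1(\text{B})$ (simultaneous approximation of $\widetilde V$ and $\nabla\widetilde V$), which holds on nice compact sets by mollification plus Weierstrass or by Bernstein-type constructions, but is a different theorem and should be cited as such, since it is $\textbf{f}\cdot\nabla V$ and not $V$ that enters the constraint. Second, Putinar requires strict positivity of $U-\Phi-\textbf{f}\cdot\nabla V$ on all of $\textbf{K}$, whereas the equality (\ref{eq}) only controls $\Phi+\textbf{f}\cdot\nabla\widetilde V$ on the invariant set $\text{B}$; if $\textbf{K}$ strictly contains $\text{B}$ the approximant may exceed $U$ on $\textbf{K}\setminus\text{B}$. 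The clean fix is to take $\textbf{K}=\text{B}$ (or an invariant semialgebraic set on which the duality (\ref{eq}) is applied), rather than ``restricting $\textbf{K}$'' after the fact, which is not an operation available to you since $\Gamma_d$ is built from the fixed defining polynomials $g_i$. A final cosmetic caveat, inherited from the paper rather than introduced by you: the sign convention $g_i\le 0$ on $\textbf{K}$ in (\ref{19}) is opposite to the convention under which the Archimedean condition $L-\|\textbf{x}\|^2\in\Gamma_d$ and the quadratic-module representation make sense, so you are right to work with the standard Putinar convention $g_i\ge 0$ on $\textbf{K}$.
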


\smallskip
\noindent
Therefore, by taking the polynomial degree of our auxiliary function to infinity we are guaranteed to achieve the sharp bounds of the theoretical formalism in Section 1, i.e., in theory we have lost nothing in restricting $V$ to being polynomial.
Coupled with Lassere's density result \cite{Lassere} it is operationally reasonable to restrict our attention to polynomial auxiliary functions as well as positive polynomials with a sum of squares representation.

In practice, one incrementally increases the allowed degree of V and the bounds are declared sharp if increasing the degree only yields small (near numerical precision) improvements in the bounds U.
In practice sharp bounds may be achieved for auxiliary functions of relatively small degree---say, around degree 8 or 10---that are computationally accessible on a standard laptop for systems with relatively low degrees of freedom.  

\bigskip

\section{A Simple Example}
\indent To illustrate the ideas introduced above in the context of a concrete example, consider the one dimensional polynomial dynamical system
\begin{equation}
    \frac{dx}{dt} = x - x^3 = f(x)
    \label{example}
\end{equation}
and quantity of interest
\begin{equation}
    \Phi(x) = x^2.
\end{equation}
For our purposes (\ref{example}) possesses three classes of solutions corresponding to three classes of initial data:
\begin{eqnarray}
    \nonumber x(t) \rightarrow -1 &\text{for}& -\infty < x_0 < 0, \\
    x(t) \rightarrow 0  &\text{for}& x_0 = 0, \ \  \text{and} \\
    x(t) \rightarrow +1  &\ \text{for}& 0 > x_0 > \infty. \nonumber \label{ex}
\end{eqnarray}
Therefore $\overline{\Phi}(0)=0$ and $\overline{\Phi}(x_0)=1$ for all $x_0 \ne 0$ so that $\overline{\Phi}^* = 1$.  But how might one discern this within the auxiliary function formulation? 

In this example it is easy to divine an optimal polynomial auxiliary function, namely $V(x) = \frac{1}{2}x^2$.
Indeed, 
\begin{eqnarray}
    \Phi + f V' &=& x^2 + (x-x^3)x \nonumber  \\
   &=& 2 x^2 - x^4 \\
   &=& 1 - (x+1)^2 (x-1)^2. \nonumber 
\end{eqnarray}
That is, for this optimal auxiliary function $\Phi(x) + f(x) V'(x) = \overline{\Phi}^* - S(x)$ where $S(x)$ is a (sum of) square(s) of polynomials.

Moreover, for this particular $\Phi(x)$ and optimal auxiliary function $V(x)$, the quantity $\Phi(x) + f(x) V'(x)$ achieves its pointwise maximum $\overline{\Phi}^*$ precisely---and only---at $x=\pm1$, points that are both optimal initial conditions (but not uniquely so---any $x_0 \ne 0$ is optimal) and optimal trajectories such that every neighborhood thereof hosts {\it every} optimal trajectory 100\% of the time over the infinite time interval of averaging.

Given our quantitative analytical knowledge of the flow map for this simple example, however, by relaxing the polynomial restriction we can also conceive a sequence of near-optimal auxiliary functions $V_{\epsilon} \in C^1(\mathbb{R})$ so that that $\lim_{\epsilon \rightarrow 0} \{ \Phi(x) + f(x)V_{\epsilon}'(x) \} = \overline{\Phi}^*$ for {\it every} $x \ne 0$.
Indeed, for every $\epsilon > 0$ define
\begin{equation}
    V_{\epsilon}(x) = \frac{1}{2} \ln{(x^2+\epsilon)}
\end{equation}
so that
\begin{eqnarray}
    \Phi(x) + f(x) V_{\epsilon}'(x)  &=& x^2 + \frac{(x-x^3)x}{x^2+\epsilon} \nonumber  \\
   &=& \frac{(1+\epsilon)x^2}{x^2+\epsilon}.
\end{eqnarray}
Then it is evident that $V_{\epsilon}$ is an increasingly near-optimal sequence of auxiliary functions in the sense that $\inf_{\epsilon > 0} \, \sup_x \{\Phi(x) + f(x)V_{\epsilon}'(x)\} = \overline{\Phi}^*$ and furthermore, as advertised, $\lim_{\epsilon \rightarrow 0} \{ \Phi(x) + f(x)V_{\epsilon}'(x) \} = \overline{\Phi}^*$ for every $x \ne 0$.
Note as well that $\Phi(x) + f(x) \times [\lim_{\epsilon \rightarrow 0} V'_{\epsilon}(x)] = \overline{\Phi}^*$ for every $x \ne 0$ even though the limit of the sequence $V_{\epsilon}$, i.e., $\ln{|x|}$, is not $C^1$.

But even {\it more} is true about this sequence of auxiliary functions: for every $x \in \mathbb{R}$
\begin{equation}
   \lim_{\epsilon \rightarrow 0} \{\Phi(x) + f(x)V_{\epsilon}'(x)\} = \overline{\Phi}(x).
\end{equation}
That is, $\Phi(x) + f(x)V_{\epsilon}'(x)$ is a sequence of functions such that its limit at each point in the phase space yields the infinite time average of $\Phi(\cdot)$ along the trajectories passing through that point.

While this impressive feature of the sequence $V_{\epsilon}(x)$ is apparent in this simple example, such sequences of increasingly near-optimal auxiliary functions $V_{\epsilon}({\bf x})$ also exist more generally for well behaved $\frac{d\textbf{x}}{dt}=\textbf{f}(\textbf{x})$ defined by sufficiently smooth vector fields $\textbf{f}$.
If we could deduce these sequences then we could bypass the dynamics altogether to estimate and evaluate long time averages $\overline{\Phi}({\bf x})$ along all trajectories.
But, alas, as of now construction of such sequences requires explicit knowledge of the flow map---complete access to all information about all trajectories \cite{CG1993,B2018}---so this approach is essentially tautological in an operational sense.
In practice at the present time we are limited to the variational methods described in section 2 to effectively compute sequences of increasingly optimal auxiliary functions.

\section{Application to Non-Autonomous and Non-polynomial systems}
\indent The theoretical formalism and computational implementation via SDP described in Sections 1 and 2 depend very much on, respectively, the autonomous nature of the dynamics and the polynomial nature of the equations of motion.
But models in many applications involve non-autonomous, i.e., “driven” systems, and non-polynomial vector fields.
Therefore, it is useful to consider how broader classes of ODEs might be recast as autonomous polynomial systems.

Periodically forced dynamics of the form
\begin{equation}
    \frac{d\textbf{x}(t)}{dt}=\textbf{f}(\textbf{x},\cos(\omega t),\sin(\omega t))
\end{equation}
with $\textbf{x}=(x_1,x_2,....,x_d)$ are particularly interesting and ubiquitous.
The traditional way of ``autonomizing'' such systems---introduce a new coordinate $x_{d+1}= t$ and extend the system dimension from $d$ to $d+1$---has the drawbacks, however, of introducing an unbounded dependent variable while retaining non-polynomial dependence on it.

For our purposes these problems can be circumvented by introducing \textit{two} new dynamical variables satisfying the polynomial sub-system
\begin{equation}
    \begin{gathered}
        \frac{dx_{d+1}}{dt}=-\omega x_{d+2} + (1 - x_{d+1}^2 - x_{d+2}^2) x_{d+1}\\
        \frac{dx_{d+2}}{dt}=\omega x_{d+1} + (1 - x_{d+1}^2 - x_{d+2}^2) x_{d+2}.
    \end{gathered}
    \label{4.2}
\end{equation}
After a uniform-in-initial-condition exponentially decaying transient, $x_{d+1} = \cos(\omega t+\phi)$ and $x_{d+2} = \sin(\omega t+\phi)$ with arbitrary phase $\phi$.
Insofar as we're ultimately interested in extreme long time behavior among all initial data, however, the phase is irrelevant: $\phi \ne 0$ corresponds to a translation of the time origin which can be absorbed into a shift in initial conditions.

Note as well that $x_{d+1} = x_{d+2} \equiv 0$ is another solution of (\ref{4.2}) that will naturally be included the maximization or minimization of $\overline{\Phi}({\bf x}_0)$ over all initial conditions.
In practice appearance of this ``spurious'' solution may be obviated theoretically by adding appropriate multiples of $x_{d+1}^2$ and/or $x_{d+2}^2$ to $\Phi$, or computationally by implementing the S-procedure.

This approach can also be used to formulate equivalent autonomous polynomial dynamics for both quasiperiodic and substantially more complex $\frac{2 \pi}{\omega}$-periodic time dependences in the vector field.
Employing a new pair of dynamical variables like those in (\ref{4.2}) for each independent frequency allows for quasiperiodic time dependence, at least for quasiperiodicity involving only a {\it finite} number of independent frequencies.
Other $\frac{2 \pi}{\omega}$-periodic time functions can be expressed  as finite linear combinations of $\cos(n \omega t)$ and $\sin(n \omega t)$, each of which in turn is a finite polynomial combination of $\cos(\omega t)$ and $\sin(\omega t)$.
The overall order of the dynamical system necessarily increases but autonomous polynomial dynamics are still sufficient to capture the systems' dynamics.

A broad class of autonomous vector fields with trigonometric variable dependence can similarly be handled similarly \cite{Parker19}.
Consider, for example,  vector fields ${\bf f}({\bf x})$ where the components $f_1$, \dots, $f_ d$ depend polynomially on $x_j$ for $j \ne i$ and on $x_i$ via $\cos x_i$ and/or $\sin x_i$ but {\it not} on $x_i$ itself, i.e., 
\begin{equation}
f_j = f_j(x_1,\dots,x_{i-1},\cos x_i,\sin x_i,x_{i+1},\dots,x_d) \ \text{for each} \ j = 1, \dots, d.
\end{equation}
For notational simplicity let us denote the ``angular'' variable $x_i(t) = \theta(t)$ and the corresponding component of the vector field
\begin{equation}
f_i = \Omega(x_1,\dots,\dots,x_{i-1},\cos \theta,\sin \theta,x_{i-1},\dots,x_d).
\end{equation}
Then augment the system with two new variables evolving according to
\begin{equation}
    \begin{split}
        \frac{dx_{d+1}}{dt}=-\Omega \, x_{d+2} + (1 - x_{d+1}^2 - x_{d+2}^2) \, x_{d+1}\\
        \frac{dx_{d+2}}{dt}=\Omega \, x_{d+1} + (1 - x_{d+1}^2 - x_{d+2}^2) \, x_{d+2}.
    \end{split}
    \label{4.5}
\end{equation}
After transients,
\begin{equation}
\begin{split}
x_{d+1}(t) &= \cos \left(\int_0^t \Omega \, ds + \theta_0 \right) \ \text{and}\\
x_{d+2} &= \sin \left(\int_0^t \Omega \, ds + \theta_0 \right)
\end{split}
\label{CS}
\end{equation}
where $\theta_0$ is determined by initial data.

The claim now is that solutions of the original $d$-dimensional system
\begin{equation}
\frac{dx_k}{dt} = f_k(x_1,\dots,x_{i-1},\cos x_i,\sin x_i,x_{i+1},\dots,x_d) \ \text{for} \ k = 1, \dots, d
\end{equation}
are in 1-to-1 correspondence with solutions of the $(d+1)$-dimensional system consisting of (\ref{4.5}) and the remaining $d-1$ differential equations
\begin{equation}
\begin{split}
         \frac{dx_j}{dt} &= f_j(x_1,\dots,x_{i-1},x_{d+1},x_{d+2},x_{i+1},\dots,x_d) \\
         &\quad \quad \text{for} \ j = 1, \dots, i-1, i+1, \dots, d.
\end{split}
\end{equation}

The new system does not involve the original $x_i$ variable which evolves passively via $dx_i/dt = \Omega(x_1,\dots,x_{i-1},x_{d+1},x_{d+2},x_{i+1},\dots,x_d)$, the right hand side of which does not depend on $x_i$.
Variation in the arbitrary phase $\theta_0$ in (\ref{CS}) corresponds to a translation of the initial condition for the eliminated $x_i$ variable, and this does not matter when we are concerned with functions of interest $\Phi$ that only depend on $x_1,\dots,x_{i-1},x_{d+1},x_{d+2},x_{i+1},\dots,x_d$ extremized over trajectories.

We remark that the $(1 - x_{d+1}^2 - x_{d+2}^2) \, x_{d+1}$ and $(1 - x_{d+1}^2 - x_{d+2}^2) \, x_{d+1}$ terms in both (\ref{4.2}) and (\ref{4.5}), enforcing amplitude constraints, may be dropped and replaced with the S-procedure to constrain $x_{d+1}$ and $x_{d+2}$ to circles in their subspace of the phase space.
In the following subsections we illustrate these approaches and their robustness by converting the periodically forced Duffing equation and the damped-driven pendulum into autonomous polynomial form and applying the SOS/SDP technologies.

\subsection{The Periodically Driven Duffing Equation.}
The damped driven Duffing system is the non-autonomous second order nonlinear ODE
\begin{equation}
    \ddot{x}+\delta \dot{x}+\alpha x+\beta x^3=F\cos(\omega t).
    \label{Duffing}
\end{equation}
It has received widespread attention for its various engineering applications, as a simple paradigmatic model that displays dynamical hysteresis, and for exhibiting chaotic behavior \cite{Kovacic11}.

The Harmonic Balance method produces $\frac{2\pi}{\omega}$-periodic approximate solutions via insertion of ansatz
\begin{equation}
    x(t)=A\cos(\omega t)+B\sin(\omega t).
    \label{ansatz}
\end{equation}
into (\ref{Duffing}) and projecting onto $\cos(\omega t)$ and $\sin(\omega t)$.
Harmonic Balance yields an implicit prediction for the frequency response curve in the form
\begin{equation}
    \Big [\Big (\omega^2 -\alpha -\frac{3}{4}\beta R^2\Big )^2+(\delta \omega)^2\Big ]R^2-F^2=0
    \label{HO}
\end{equation}
where $R=\sqrt{A^2+B^2}$.
For fixed parameters $\alpha$, $\beta$, $F$, and $\delta$, one can solve for the roots (\ref{HO}) to deduce the oscillation amplitude $R$.

When $\alpha>0$ and $\beta>0$ or $\beta<0$ we say that the nonlinearly perturbed oscillator has been ``stiffened'' or ``softened'' and the frequency response curve tilts to the right or to the left respectively; see Figure 1. 
\begin{figure}[h!]
    \includegraphics[width=\textwidth, height=6cm]{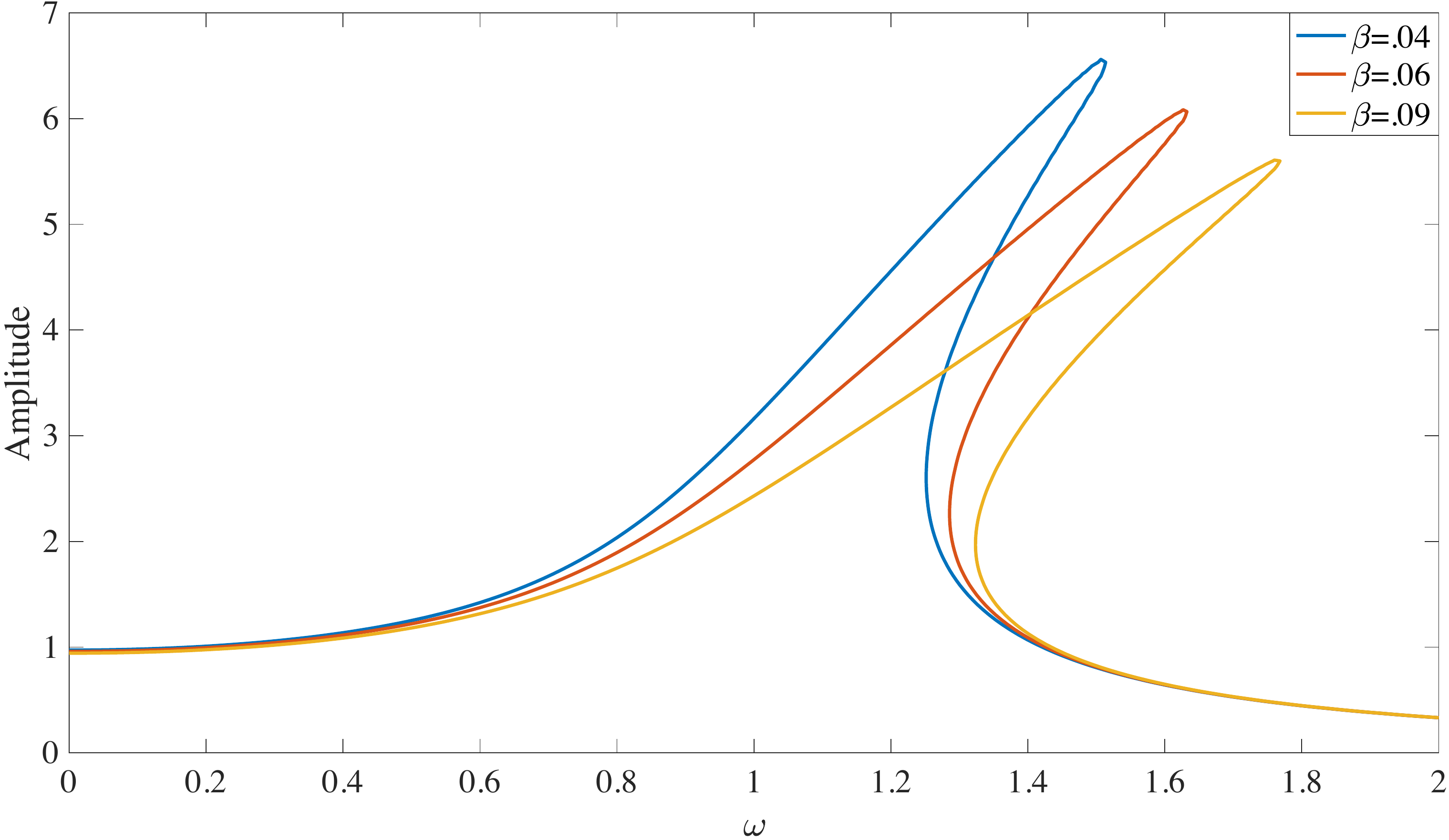}
    \caption{Harmonic Balance approximate mean amplitude $R=\sqrt{A^2+B^2}$ vs.~driving frequency $\omega$ with $\delta=.1$, $\alpha=1$, $F=1$, and $\beta=.04, .06, .09$. }
    \label{fig:Duffing}
\end{figure}

A natural question is to ask how well the Harmonic Balance method approximates true solutions of (\ref{Duffing}).
In particular, we can compare its predictions with independent approaches to recover the frequency response curves like those in Figure 1.
In the following we employ the auxiliary function method implemented in an SDP via SOS techniques.

The Duffing equation (\ref{Duffing}) is not of the form (\ref{AODE}) so we proceed by augmenting it with two additional variables to make the system autonomous.
It is then realized as the 4-dimensional first order system
\begin{equation}
\begin{split}
    &\dot{x}=y\\
    &\dot{y}=z_2-\delta y-\alpha x-\beta x^3\\
    &\dot{z_1}=\omega z_2\\
    &\dot{z_2}=-\omega z_1,
    \end{split}
\end{equation}
where the amplitudes of $z_1$ and $z_2$ will be enforced by the S-procedure so that $z_1=F\sin(\omega t+\phi)$ and $z_2=F\cos(\omega t+\phi)$ with phase $\phi$ determined by initial conditions but which is irrelevant for long time averages.
\par When the function to be maximized is
\begin{equation}
    \Phi(x,y,z_1,z_2)=x^2,
\end{equation}
the relevant SDP is 
\begin{equation}
    \begin{gathered}
    \min \text{U}\\
    \text{s.t.}\,\, \text{U}-x^2-\textbf{f}(x,y,z_1,z_2)\cdot \nabla \text{V}+ \dots \\
    \dots + \text{S}(x,y,z_1,z_2)(F^2-{z_1}^2-{z_2}^2)\in \mathcal{S}[x,y,z_1,z_2]\\
    \text{S}\in \mathcal{S}[x,y,z_1,z_2].
    \end{gathered}
\end{equation}

We now systematically increase the polynomial degrees of both V and S until sharp bounds are achieved. 
(Lower bounds on $\overline{x^2}$ can be computed by negating $\Phi$, performing the SDP, and taking the absolute value of the resulting U.)

\begin{figure}[h!]
\begin{center}
    \includegraphics[width=\textwidth, height=6.4cm]{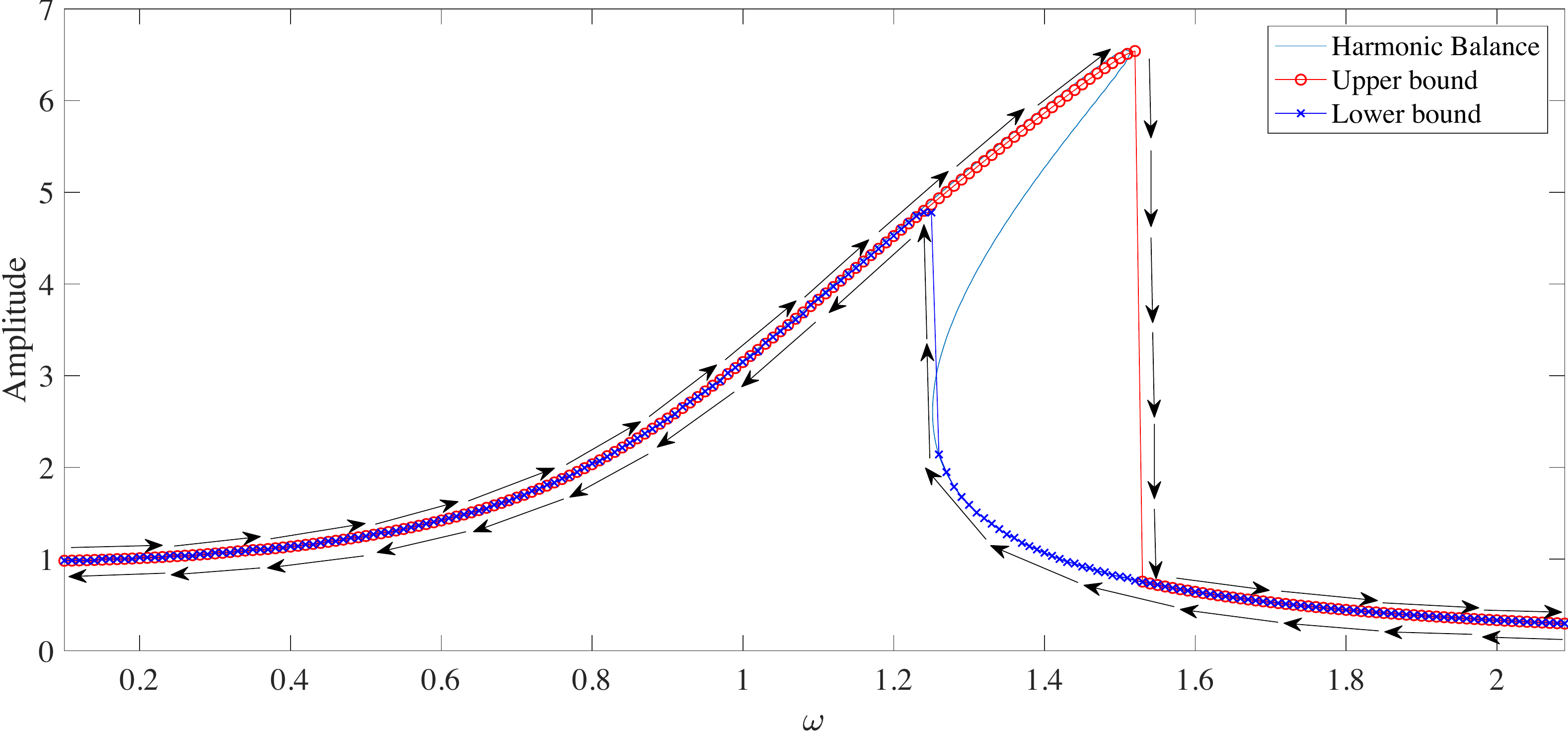}
    \vspace*{-.5cm}\caption{Harmonic Balance approximate mean amplitude, $\sqrt{2 \overline{x^2}}$, and upper and lower bounds on the solutions' mean amplitude vs.~driving frequency $\omega$ with $\delta=.1$, $\alpha=1$, $\beta=.04$, and $F=1$ for a degree 10 polynomial auxiliary function.}
    \label{fig:Duffing}
    \end{center}
\end{figure}

The Harmonic Balance approximation of the mean amplitude agrees remarkably well with the upper and lower bounds on the true solution's mean amplitude; see Figure 2.
The differences between the upper and lower bounds plotted in Figure 3 suggest that they agree (to computational precision) for points on the frequency response curve that are single valued when the degree of the auxiliary function is sufficiently high.
Not unexpectedly, there is an order 1 difference between the bounds when the curve is multi-valued.
We conclude that for this sort of small amplitude forcing and weak nonlinearity, the Harmonic Balance approximation does exceptionally well quantitatively approximating the true solution's mean amplitude---even though the forcing and nonlinearity are strong enough to induce multi-stability and hysteresis.

\begin{figure}[h!]
    \includegraphics[width=\textwidth, height=7.5cm]{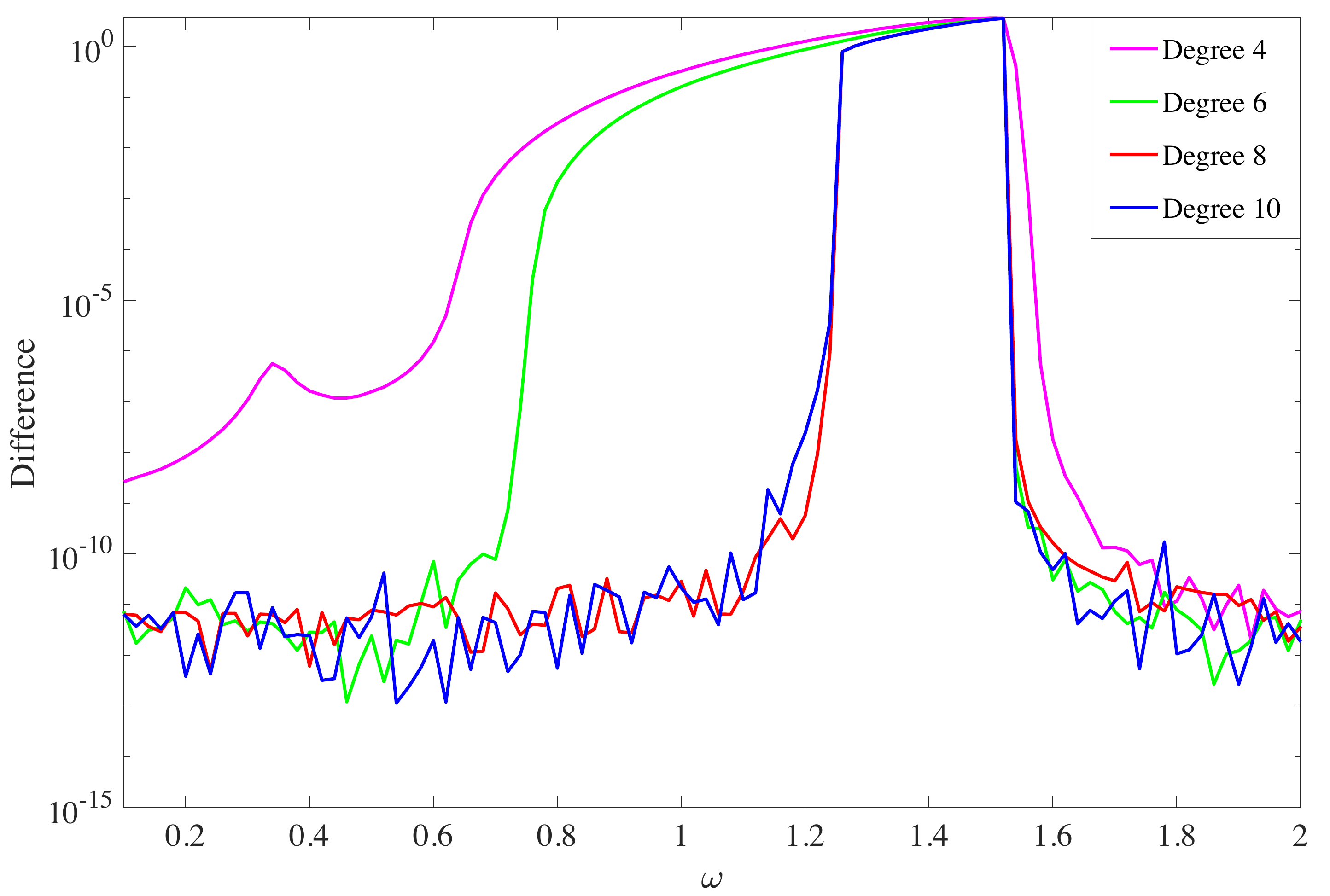}
    \vspace*{-.5cm}\caption{Difference between the upper and lower bounds on the solution's mean amplitude vs.~driving frequency for degree 4, 6, 8 and 10 polynomial auxiliary functions.}
    \label{fig:Error}
\end{figure}

It is worthwhile remarking that if we consider the degree of the auxiliary functions as a parameter then there seem to be $\omega$-dependent thresholds in the parameter space for which the degree 6 bounds become sharp.
In this example a transition occurs at $\omega \approx .7$ which, to our knowledge, is no particularly special frequency value.
It suggests that exact extreme orbits live in semi-algebraic sets, $\overline{\Phi}^* = \Phi({\bf x}) + {\bf f}({\bf x})\cdot{\bf \nabla}V({\bf x})$ for polynomial optimal $V$, whose complexity (degrees) change discontinuously with the system parameters.
Under what conditions we might expect such transitions to occur, especially with a smoothly varying parameter such as $\omega$, is an question for research in its own right.

\subsection{The Damped Periodically Driven Pendulum}
Consider a damped and periodically driven pendulum dynamics defined by the non-polynomial and non-autonomous $2^{nd}$ order ODE\begin{equation}
    \ddot{\theta}+\gamma\dot{\theta}+\sin(\theta)=F\cos(\omega t).
    \label{pend}
\end{equation}

For weak forcing, the sinusoidal non-linearity may be modeled by expanding the $\sin(\theta)$ term in a Taylor series, and we employ a procedure similar to that of the Duffing example here to test the validity of the Harmonic Balance approximation. We expect the two term expansion of the $\sin(\theta)$ term---that results in a Duffing equation---to perform poorly, however, for moderately large forcing amplitude.
Hence, we expand the $\sin(\theta)$ term in a Taylor Series to 7th order and employ (\ref{ansatz}) to obtain an approximate frequency response curve.
The result is
\begin{equation}
\begin{gathered}
    \frac{R^2(R^6+1152R^2-48R^4-9216)^2}{84934656}+R^2\omega^4\\
    +\frac{R^2(R^6+4608(\gamma^2-2)+1152R^2-48R^4)\omega^2}{4608}-F^2=0,
   \end{gathered}
\end{equation}
where $R=\sqrt{A^2+B^2}$.
The calculation is tedious and purely algebraic, but plots of $R$ vs.~$\omega$ for several forcing amplitudes are shown in Figure 4.
\begin{figure}[h!]
    \includegraphics[width=\textwidth, height=6.6cm]{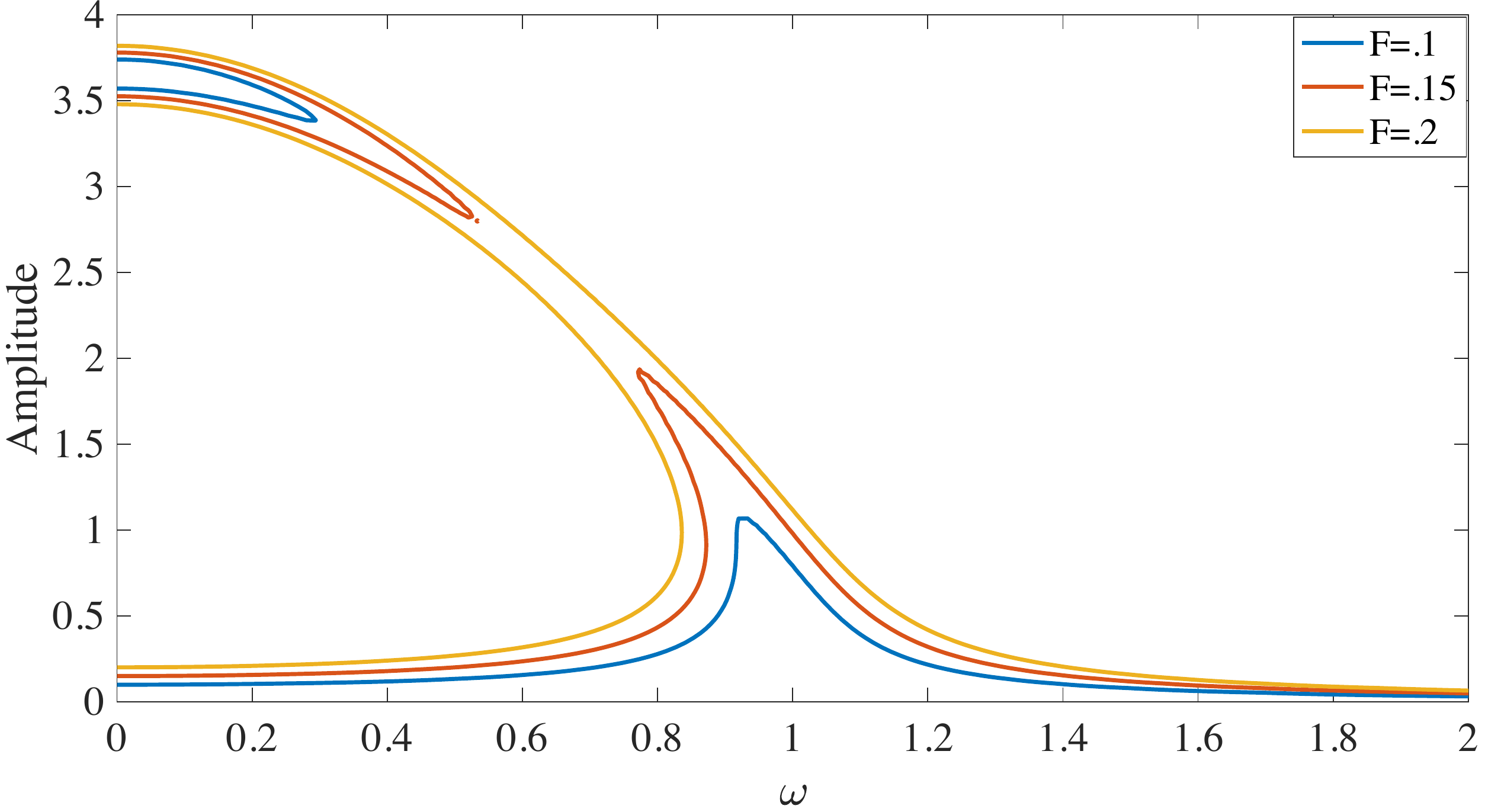}
    \vspace*{-.5cm}\caption{Plot of the Harmonic Balance approximate mean amplitude vs.~$\omega$ with $\gamma=.1$ and $F= 0.10, 0.15 \text{ and } 0.20$. }
    \label{fig:Duffing}
\end{figure}

In this example we will seek to compare the mean mechanical energy $E = \frac{1}{2}(\dot{\theta})^2-\cos(\theta)$ from the Harmonic Balance approximation with auxiliary function bounds on solutions to (\ref{pend}). 
But as written, (\ref{pend}) is neither polynomial nor autonomous which prevents immediate implementation of the polynomial optimization via an SDP.

Augmenting the system with \textit{four} additional variables, however, we may re-write equation (\ref{pend}) as the $4$-dimensional first order polynomial system
\begin{equation}
    \begin{split}
        \dot{\phi}=z_1-\gamma\phi-\psi_1\\
        \dot{\psi_1}=\phi\psi_2\\
        \dot{\psi_2}=-\phi\psi_1\\
        \dot{z_1}=\omega z_2\\
        \dot{z_2}=-\omega z_1.
    \end{split}
\end{equation}

The quantity of interest to extremize is the total energy plus $z_1^2$ given by
\begin{equation}
    E + z_1^2 = \frac{1}{2}(\dot{\theta})^2-\cos(\theta)+z_1^2=\frac{1}{2}(\phi)^2-\psi_2+{(z_1)}^2 = \Phi.
\end{equation}
The $z_1^2$ makes the SDP more computationally tractable and, because $\overline{z_1^2} = \frac{1}{2}$, we can interpret the upper-bound and lower bounds on the mean energy as a $\frac{1}{2}$ shift down and up, respectively.
Letting $\textbf{x}=[\phi,\psi_1,\psi_2,z_1,z_2]$, the semi-definite program for upper bounds becomes
\begin{equation}
    \begin{gathered}
    \min \text{U}\\
    \text{s.t.} \,\, \text{U}-\Phi(\textbf{x})-\textbf{f}(\textbf{x})\cdot \nabla \text{V}(\textbf{x})
    +C_1(\textbf{x})+C_2(\textbf{x}) \in \mathcal{S}(\textbf{x})\\
    S_1,S_2\in \mathcal{S}(\textbf{x}),
    \end{gathered}
    \label{37}
\end{equation}
where $C_1(\textbf{x})=S_1(\textbf{x})(F^2-{z_1}^2-{z_2}^2)$ and $C_2(\textbf{x})=S_2(\textbf{x})(1^2-{\psi_1}^2-{\psi_2}^2)$. 
Lower bounds on $\overline{\Phi}$ are computed just as in the Duffing setting.

\begin{figure}[h!]
    \includegraphics[width=\textwidth, height=7.7cm]{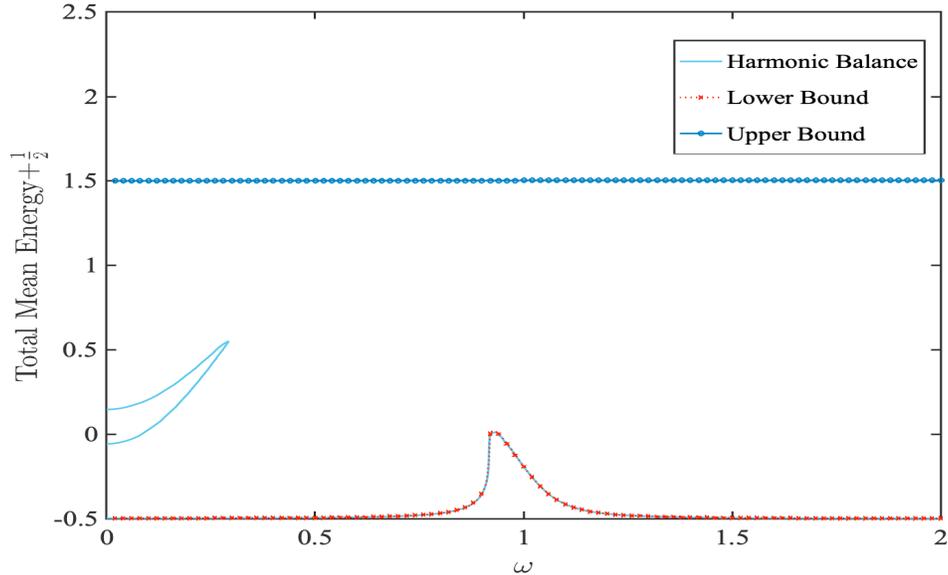}
    \vspace*{-.6cm}\caption{Plot of the bounds and Harmonic Balance approximate total mean energy vs. driving frequency, $\omega$,  with $\gamma=.1$, $F=0.10$ and degree 6 polynomial auxiliary functions.
    }
    \label{fig:Duffing}
\end{figure}

Performing the SDP in (\ref{37}), we find that the auxiliary function method's lower bound on the mean energy and the harmonic balance approximation to the mean energy can agree quite nicely---for sufficiently weak forcing.
See Figure 5.
As the forcing amplitude increases, however, the Harmonic Balance approximation is bound to fail.

On the other hand the upper bound in Figure 5 clearly does not correspond to the Harmonic Balance approximations we found.
Indeed, the upper bound with $\overline{\Phi}^* \approx 1.5$ in Figure 5 suggests that there is a solution that spends most of the time oscillating weakly around $\theta=\pi$ as illustrated in Figure 6.
Due to its dynamical instability, however, one would never expect to discover it via direct numerical simulation. 


\begin{figure}[h!]
\begin{center}
    \includegraphics[width=5cm, height=5cm]{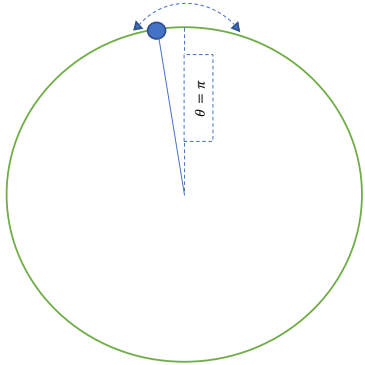}
    \caption{A potentially unstable solution ocillating about a neighborhood of $\theta=\pi$.}
    \label{fig:HP1}
    \end{center}
\end{figure}

With this interpretation in mind, we can make the linear change of variables such that $\theta'=\pi-\theta$.
Then when $\theta$ has low potential energy $\theta'$ has high potential energy and vice versa.
Figure 7 is the Harmonic Balance approximation with the Taylor expansion performed about $\theta=\pi$, the analog of Figure 4.
Meanwhile Figure 8 shows that the harmonic balance approximation of the high potential solution's total mean energy agrees quite well with the auxiliary function upper bound on the true solution's total mean energy.

\begin{figure}[h!]
\begin{center}
   \includegraphics[width=\textwidth, height=7.7cm]{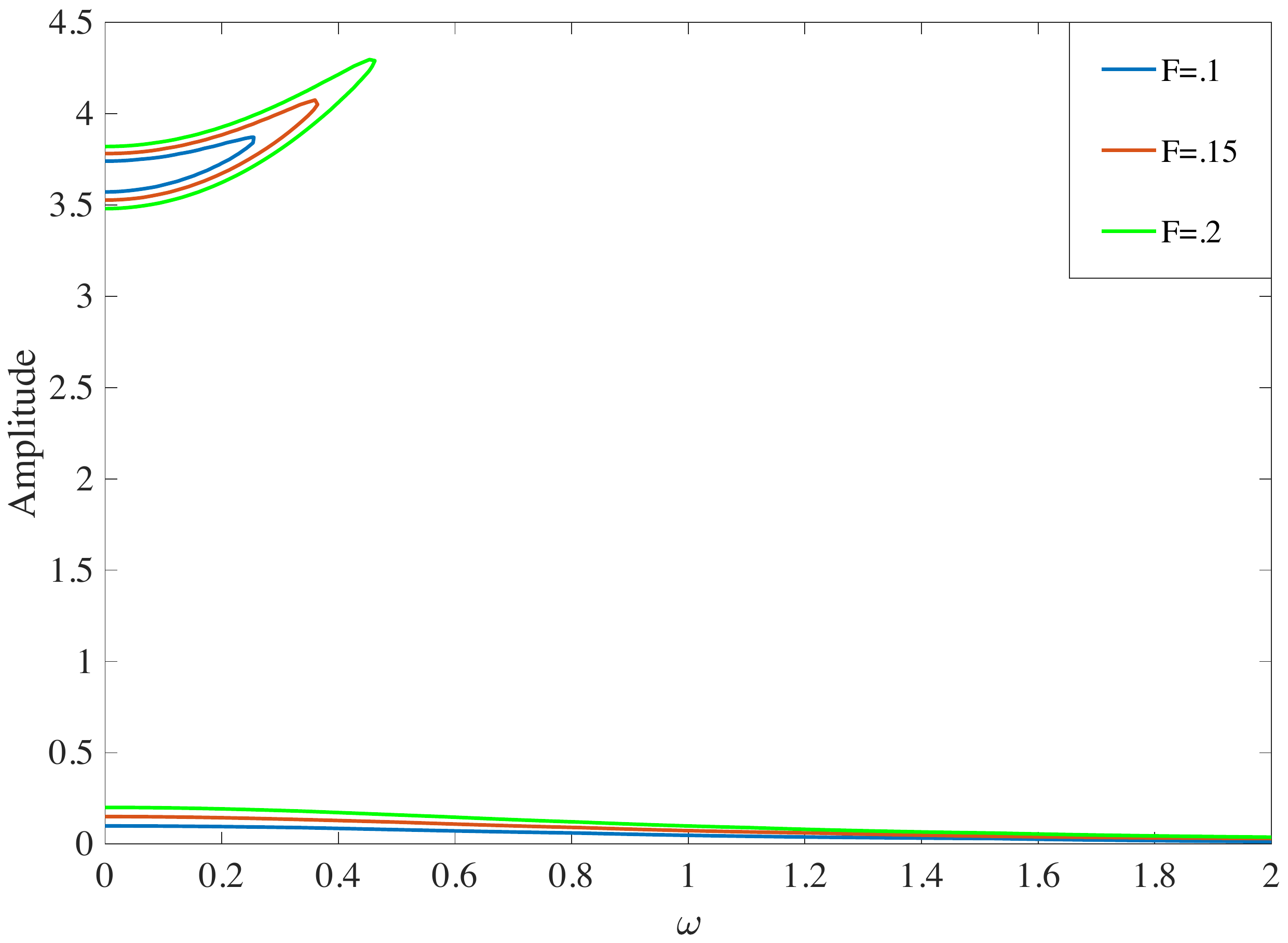}
     \vspace*{-.6cm}\caption{Harmonic Balance approximate mean amplitude about $\theta = \pi$ vs.~driving frequency $\omega$ with $\gamma=.1$ and $F$ = 0.10, 0.15 and 0.20.}
    \label{fig:HP1}
    \end{center}
\end{figure}

\begin{figure}[h!]
\vspace{.6cm}
    \includegraphics[width=\textwidth, height=7.7cm]{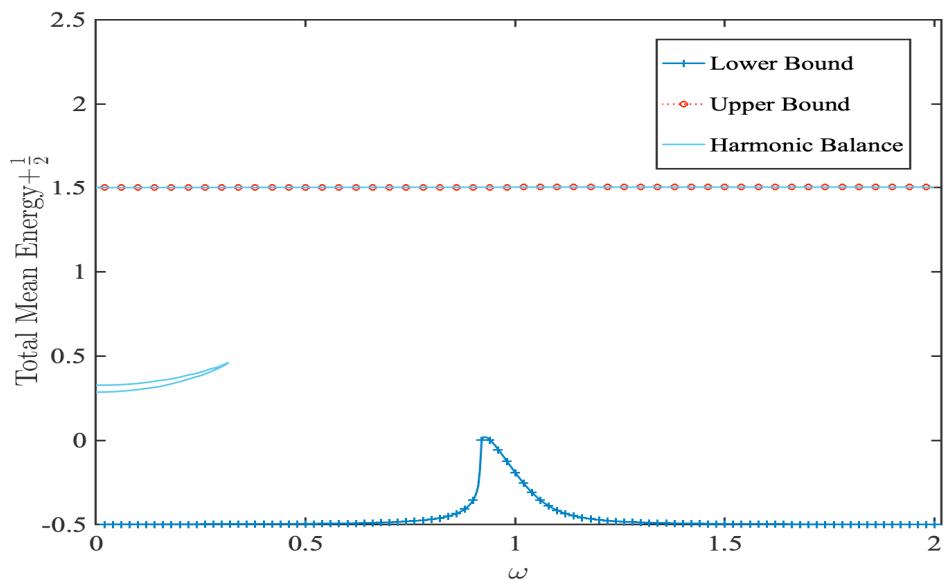}
    \vspace*{-.6cm}\caption{Plot of the bounds and Harmonic Balance approximation (about $\theta = \pi$) of $\overline{\Phi}$ vs.~driving frequency $\omega$ with $\gamma=.1$, $F=0.20$ and degree 6 polynomial auxiliary functions.}
    \label{fig:HP1}
\end{figure}

This example illustrates one of the operational ``quirks'' of the auxiliary function method: it produces upper bounds or lower bounds on the chosen $\Phi$ across all potential initial conditions including those that breed not readily observed unstable solutions.
Of course the knowledge of the existence of such unstable solutions is frequently a concern---it is certainly the central concern for control-of-chaos applications---but if one is interested in estimates of long time averages of $\Phi$ on particular solutions (or branches of solutions) there is currently no supplementary procedure that one can employ to ensure that the bounds computed correspond with specific trajectories. 


\section{Summary \& Discussion}

There are several key remarks to be made regarding both what we've done and future directions.
First is that we've displayed the robustness of the auxiliary function and SOS-SDP technologies to handle both non-autonomous and (certain forms of) trigonometric dependence in non-linear ODEs.
Such systems are ubiquitious in applications and as canonical case studies.
Second, we note that the SOS technology is computationally tractable even for much larger ODE systems that we considered here.
The plots for this paper were produced on a standard laptop, and the procedure of augmenting a dynamical system with additional polynomial degrees of freedom appears robust. 
Additionally, we observe that sharp---within computer precision---bounds are often recovered for polynomial auxiliary functions of reasonably restricted degree.
This appears to be the case not only in this work, but also various others \cite{Chern14,Fantuzzi16}, so the SDP algorithm is able to concentrate the relative coefficients on potentially severely truncated polynomials for which sharp bounds are guaranteed. 

We can clearly see areas of ongoing research for which this technology is broadly applicable.
Many such problems can directly be cast in the light of the auxiliary function method and have been inaccessible until due to both theoretical and computational limitations. 
There are a variety applied science and engineering application where moderately low dimensional ODE systems serve as central models for both conceptual and design purposes.
These include energy harvesting \cite{Mann09,Wei} where the challenge is to optimally extract power from vibrations of a continuously stimulated mechanical body where mathematical models often consist of periodically driven nonlinear oscillators \cite{Erturk}.
Another area is the periodic operation of chemical and biochemical reactors \cite{Silveston} where the task is to optimize the time-average production of certain byproducts.
Mass action and related kinetic models often consist of ODEs with polynomial vector fields.
Circadian \cite{Gonze,Komin} or seasonally forced \cite{Greenman,Taylor} models in biology, ecology and epidemiology are often described by such periodically driven ODEs with polynomial vector fields as well.

Finally, we recognize the frontier for application of the auxiliary function approach and related numerical methods to systems described by partial differential equations (PDEs).
Of course PDEs are often approximated by finite---albeit sometimes very large---systems of ODEs, but fundamental mathematical and computational questions remain for future research.

\section{Acknowledgements}
\par Some of the ideas presented here were developed in discussions with David Goluskin (Victoria) and Jeremy Parker (Cambridge) supported in part by US National Science Foundation Awards DMS-1813003 at the University of Michigan and OCE-1829864 for the Geophysical Fluid Dynamics Program at Woods Hole Oceanographic Institution.

\bibliographystyle{unsrt}

\begin{thebibliography}{99}

\bibitem{B2018}
J. Bochi, {\it Ergodic optimization of Birkhoff averages and Lyapunov exponents}, Proceedings of the International Congress of Mathematicians {\bf 3} (2018): 1843--1864.

\bibitem{CG1993}
 J.-P. Conze and Y. Guivarc’h, {\it Croissance des sommes ergodiques} (unpublished, circa 1993).

 \bibitem{Chern14}
 S.I.~Chernyshenko, P.~Goulart, D.~Huang and A.~Papachristodoulou, {\it Polynomial sum of squares in fluid dynamics: a review with a look ahead}, Philosophical Transactions of the Royal Society A {\bf 372} (2014): 20130350.

\bibitem{Erturk} A.~Erturk and D.J~Inman, {\it Broadband Piezoelectric Power Generation on High-Energy Orbits of the Bistable Duffing Oscillator with Electromechanical Coupling}, Journal of Sound \& Vibration {\bf 330} (2011): 2339--2353.

\bibitem{Fantuzzi16}
G.~Fantuzzi, D.~Goluskin, D.~Huang and S.I.~Chernyshenko, {\it Bounds for Deterministic and Stochastic Dynamical Systems using Sum-of-Squares Optimization}, SIAM Journal on Applied Dynamical Systems {\bf 15} (2016): 1962--1988.

\bibitem{Goluskin18} D.~Goluskin, {\it Bounding Averages Rigorously Using Semidefinite Programming: Mean Moments of the Lorenz System}, Journal of Nonlinear Science {\bf 28} (2018): 621--651.

\bibitem{Gonze} D.~Gonze, S.~Bernard, C.~Waltermann, A.~Kramer and H.~Herzel, {\it Spontaneous Synchronization of Coupled Circadian Oscillators}, Biophysical Journal {\bf 89} (2005): 120--129.

\bibitem{Greenman} J.V.~Greenman and R.A.~Norman, {\it Environmental Forcing, Invasion and Control of Ecological and Epidemiological Systems}, Journal of Theoretical Biology {\bf 247} (2007): 492–506.

\bibitem{Hilbert} D.~Hilbert,  {\it \"Uber die Darstellung definiter Formen als Summe von Formenquadraten}, Mathematische Annalen. {\bf 32} (1888): 342--350.

\bibitem{HO1996} B.R.~Hunt and E.~Ott, {\it Optimal periodic orbits of chaotic systems occur at low period}, Physical Review E {\bf 54} (1996): 328--337.

\bibitem{Zhong02} Z.-P.~Jiang, {\it Advanced Feedback Control of the Chaotic Duffing Equation}, IEEE Transactions on Circuits and Systems I: Fundamental Theory and Applications {\bf 49} (2002): 244--249.

\bibitem{OJ2006} O.~Jenkinson, {\it Ergodic Optimization},
Discrete \& Continuous Dynamical Systems {\bf 15} (2006): 197--224.

\bibitem{Komin} N.~Komin, et al. {\it Synchronization and Entrainment of Coupled Circadian Oscillators}, Interface Focus {\bf 1} (2010): 167–176.

\bibitem{Kovacic11} I.~Kovacic and M.~J.~Brennan, {\it The Duffing Equation: Nonlinear Oscillators and Their Behaviour} (Wiley, 2011).

\bibitem{Lakshmi}  M.~Lakshmi, G.~Fantuzzi, J.~Fernandez-Caballero, Y.~Hwang and S.I~Chernyshenko, {\it Finding extremal periodic orbits with polynomial optimization, with application to a nine-mode model of shear flow}, arXiv:1906.04001
(2020).
 
\bibitem{Lassere} J.B.~Lasserre, {\it A Sum of Squares Approximation of Nonnegative Polynomials}, IFAC Proceedings Volumes {\bf 38} (2005): 441--444.

\bibitem{Lassere10}
J.B.~Lasserre, {\it Moments, Positive Polynomials and Their Applications} (Imperial College Press 2010).

\bibitem{Mann09}
B.P.~Mann and N.D~Sims, {\it Energy Harvesting from the Nonlinear Oscillations of Magnetic Levitation}, Journal of Sound and Vibration {\bf 319} (2009): 515--530.

\bibitem{Marshall} M.~Marshall, {\it Positive Polynomials and Sums of Squares} (American Mathematical Society 2008).

\bibitem{NP} K.G.~Murty and S.N.~Kabadi, {\it Some NP-complete problems in quadratic and nonlinear programming}, Mathematical Programming {\bf 39} (1987):117--129.

\bibitem{OGY1990}
E.~Ott, C.~Gregobi and J.A.~Yorke, {\it Controlling Chaos}, Physical Review Letters {\bf 64} (1990): 1196--1199.

\bibitem{Parker19} J.~Parker, {\it Can sum-of-squares programming tell us anything useful about Hamiltonian chaos?},  GFD Report (Woods Hole Oceanographic Institution 2019).

\bibitem{parillo1} P.~Parrilo, {\it Polynomial Optimization, Sums of Squares, and Applications} in Semidefinite Optimization and Convex Algebraic Geometry, MOS-SIAM Series on Optimization (SIAM, Philadelphia, 2012).

\bibitem{parillo2} P.A.~Parrilo, {\it Semidefinite programming relaxations for semialgebraic problems.}, Mathematical Programming Series B {\bf 96} (2003): 293--320.

\bibitem{Silveston} P.L~Silveston and R.R.~Hudgins, {\it Periodic Operation of Reactors} (Butterworth-Heinemann 2013).

\bibitem{Taylor} R.A.~Taylor, J.A.~Sherratt and A.~White, {\it Seasonal Forcing and Multi-Year Cycles in Interacting Popula-tions: Lessons from a Predator-Prey Model}, Journal of Mathematical Biology {\bf 67} (2012): 1741--1764.

\bibitem{Tobasco18}
I.~Tobasco, D.~Goluskin and C.R.~Doering, {\it Optimal bounds and extremal trajectories for time averages in nonlinear dynamical systems}, Physics Letters A {\bf 382} (208): 382--386.

\bibitem{Boyd} L.~Vandenberghe and S.~Boyd. {\it Semidefinite Programming},
 SIAM Review {\bf 38}: 49--95, 2006.

\bibitem{Wei} C.~Wei and J.~Xingjian, {\it A Comprehensive Review on Vibration Energy Harvesting: Modelling and Realization}, Renewable and Sustainable Energy Reviews {\bf 74} (2017): 1--18.

\bibitem{YangT00}
 T.H.~Yang, B.R.~Hunt and E. Ott, {\it Optimal periodic orbits of continuous time chaotic systems}, Physical Review E {\bf 62} (2000): 1950--1959.

\end{thebibliography}

\end{document}